\newcommand{\menge}[2]{\big\{{#1} \mid {#2}\big\}}
\newcommand{\emp}{\ensuremath{{\varnothing}}}
\newcommand{\scal}[2]{\left\langle{#1}\mid {#2} \right\rangle}
\newcommand{\vuo}{\ensuremath{\mbox{\footnotesize$\square$}}}
\newcommand{\HH}{\ensuremath{\mathcal H}}
\newcommand{\GG}{\ensuremath{\mathcal G}}
\newcommand{\KKK}{\ensuremath{\boldsymbol{\mathcal K}}}
\newcommand{\GGG}{\ensuremath{\boldsymbol{\mathcal G}}}
\newcommand{\VV}{\ensuremath{\boldsymbol{V}}}
\newcommand{\MM}{\ensuremath{\boldsymbol{M}}}
\newcommand{\SSS}{\ensuremath{\boldsymbol{S}}}
\newcommand{\ri}{\ensuremath{\operatorname{ri}}}
\newcommand{\RR}{\ensuremath{\mathbb R}}
\newcommand{\RPP}{\ensuremath{\,\left]0,+\infty\right[}}
\newcommand{\NN}{\ensuremath{\mathbb N}}
\newcommand{\dom}{\ensuremath{\operatorname{dom}}}
\newcommand{\prox}{\ensuremath{\operatorname{prox}}}
\newcommand{\argmin}{\ensuremath{\operatorname{argmin}}}
\newcommand{\ran}{\ensuremath{\operatorname{ran}}}
\newcommand{\zer}{\ensuremath{\operatorname{zer}}}
\newcommand{\gra}{\ensuremath{\operatorname{gra}}}
\newcommand{\vv}{\ensuremath{\boldsymbol{v}}}
\newcommand{\xx}{\ensuremath{\boldsymbol{x}}}
\newcommand{\yy}{\ensuremath{\boldsymbol{y}}}
\newcommand{\bb}{\ensuremath{\boldsymbol{b}}}
\newcommand{\cc}{\ensuremath{\boldsymbol{c}}}
\newcommand{\dd}{\ensuremath{\boldsymbol{d}}}
\newcommand{\aaa}{\ensuremath{\boldsymbol{a}}}
\newcommand{\ww}{\ensuremath{\boldsymbol{w}}}
\newcommand{\TT}{\ensuremath{\boldsymbol{T}}}
\newcommand{\AAA}{\ensuremath{\boldsymbol{A}}}
\newcommand{\BBB}{\ensuremath{\boldsymbol{B}}}
\newcommand{\QQ}{\ensuremath{\boldsymbol{Q}}}
\newcommand{\Id}{\ensuremath{\operatorname{Id}}}
\newcommand{\IId}{\ensuremath{\boldsymbol{\operatorname{Id}}}}
\newcommand{\weakly}{\ensuremath{\rightharpoonup}}
\newtheorem{theorem}{Theorem}[section]
\newtheorem{corollary}[theorem]{Corollary}
\theoremstyle{plain}{\theorembodyfont{\rmfamily}
}
\theoremstyle{plain}{\theorembodyfont{\rmfamily}
}
\theoremstyle{plain}{\theorembodyfont{\rmfamily}
}
\theoremstyle{plain}{\theorembodyfont{\rmfamily}
\newtheorem{example}[theorem]{Example}}
\theoremstyle{plain}{\theorembodyfont{\rmfamily}
\newtheorem{problem}[theorem]{Problem}}
\theoremstyle{plain}{\theorembodyfont{\rmfamily}
\newtheorem{remark}[theorem]{Remark}}
\theoremstyle{plain}{\theorembodyfont{\rmfamily}
}
\definecolor{labelkey}{rgb}{0,0.08,0.45}
\definecolor{refkey}{rgb}{0,0.6,0.0}
\definecolor{Brown}{rgb}{0.45,0.0,0.05}
\definecolor{dgreen}{rgb}{0.00,0.49,0.00}
\definecolor{dblue}{rgb}{0,0.08,0.75}
\numberwithin{equation}{section}
\begin{document}
\title{\sffamily\huge 
A splitting algorithm for dual monotone inclusions involving 
cocoercive operators\footnote{
This work was supported by the Agence Nationale de la 
Recherche under grant ANR-08-BLAN-0294-02 and the 
Vietnam National Foundation for Science and Technology Development.}}
\author{ B$\grave{\text{\u{a}}}$ng C\^ong V\~u\\[5mm]
\small UPMC Universit\'e Paris 06\\
\small Laboratoire Jacques-Louis Lions -- UMR CNRS 7598\\
\small 75005 Paris, France\\[2mm]
\small\url{vu@ann.jussieu.fr} \\
}
\date{~}
\maketitle
\begin{abstract}
We consider the problem of solving dual monotone inclusions
involving sums of composite parallel-sum type operators.
A feature of this work is to exploit explicitly the cocoercivity of 
some of the operators appearing in the model. Several splitting
algorithms recently proposed in the literature are recovered as
special cases.
\end{abstract}

{\bf Keywords}: 
cocoercivity,
forward-backward algorithm,
composite operator,
duality,
monotone inclusion,
monotone operator,
operator splitting,
primal-dual algorithm

{\bf Mathematics Subject Classifications (2010)} 
47H05, 49M29, 49M27, 90C25 

\section{Introduction}

Monotone operator splitting methods have found many applications in
applied mathematics, e.g., evolution inclusions~\cite{plc2010}, 
partial differential equations~\cite{Atto08,Gabay83,mercier79}, 
mechanics \cite{Glow89},
variational inequalities~\cite{livre1,facchinei03}, Nash 
equilibria \cite{Luis11}, and various optimization problems 
\cite{siop2,cai,Chamb11,jmaa2,Banf09,siam05,JC10,Tseng91}.
In such formulations, cocoercivity often plays a central role; 
see for instance
\cite{plc2010,livre1,Chen97,Siop1,facchinei03,Gabay83,Glow89,%
mercier79,Tseng90,Tseng91,zhu96}. Recall that an operator
$C\colon\HH\to\HH$ is cocoercive with constant $\beta\in\RPP$ if
its inverse is $\beta$-strongly monotone, that is,
\begin{equation}\label{cococo}
(\forall x\in\HH)(\forall y\in\HH)\quad
\scal{x-y}{Cx-Cy}\geq\beta\|Cx-Cy\|^2.
\end{equation}
In this paper, we revisit a general primal-dual splitting framework
proposed in \cite{plc6} in the presence Lipschitzian operators in 
the context of cocoercive operators. This will lead to a
new type of splitting technique and provide a unifying framework for
some algorithms recently proposed in the literature. The problem
under investigation is the following, where the parallel sum
operation is denoted by $\vuo$ (see \eqref{khoiduc}).

\begin{problem}
\label{prob1} 
Let $\HH$ be a real Hilbert space, let $z\in\HH$,
let $m$ be a strictly positive integer,
let $(\omega_i)_{1\leq i \leq m}$ 
be real numbers in $\left]0,1\right]$ such that 
$\sum_{i=1}^m \omega_i =1$,
let $A\colon\HH\to 2^{\HH}$
be maximally monotone, and let $C\colon\HH\to\HH$ 
be $\mu$-cocoercive for some 
$\mu\in\left]0,+\infty\right[$.
For every $i\in\{1,\ldots, m\}$, let $\GG_i$ be a real Hilbert 
space, let $r_i \in \GG_i$, let $B_i\colon \GG_i\to2^{\GG_i}$ 
be maximally monotone, let $D_i\colon \GG_i \to 2^{\GG_i}$
be maximally monotone and
$\nu_i$-strongly monotone for some $\nu_i\in\left]0,+\infty\right[$,
and suppose that $L_i\colon\HH \to\GG_i$ 
is a nonzero bounded linear operator. 
The problem is to solve the primal inclusion
\begin{equation}\label{primal}
\text{find $\overline{x}\in\HH$ such that}\; 
z\in A\overline{x}+\sum_{i=1}^m\omega_iL^{*}_i
\big((B_i\;\vuo\; D_i)(L_i\overline{x}-r_i)\big)+C\overline{x},
\end{equation}
together with the dual inclusion
\begin{equation}\label{dual}
\text{find $\overline{v}_1 \in \GG_1,\ldots, \overline{v}_m \in \GG_m$ such that 
 $(\exists\; x\in\HH)$ }
\begin{cases}
z-\sum_{i=1}^m\omega_i L_{i}^*\overline{v}_i\in Ax + Cx\\
\big(\forall i\in\{1,\ldots,m\}\big)\; \overline{v}_i \in (B_i\;\vuo\; D_i)(L_i x-r_i).
\end{cases}
\end{equation}
We denote by $\mathcal{P}$ and $\mathcal{D}$ the sets of solutions 
to \eqref{primal} and~\eqref{dual}, respectively.
\end{problem}

In the case when $(D_i^{-1})_{1\leq i\leq m}$ and $C$ are general
monotone Lipschitzian operators, Problem~\ref{prob1} was 
investigated in \cite{plc6}. Here are a couple of special cases 
of Problem~\ref{prob1}.

\begin{example}
In Problem~\ref{prob1}, set $z=0$ and
\begin{equation}\label{tytyt}
 \big(\forall i\in\{1,\ldots,m\}\big)\quad B_{i}\colon
v\mapsto \{0\}\quad\text{and}\quad
D_i\colon v\mapsto 
\begin{cases}
\GG_i & \text{if $v=0$},\\
0& \text{if $v \neq 0$}.
\end{cases}
\end{equation}
The primal inclusion~\eqref{primal} reduces to 
\begin{equation}\label{forba}
\text{find $\overline{x}\in\HH$ such that}\; 
0\in A\overline{x}+C\overline{x}.
\end{equation}
This problem is studied in
\cite{plc2010,Chen97,Siop1,siam05,mercier79,Tseng90,Tseng91}. 
\end{example}

\begin{example}\label{LPL}
Suppose that in Problem~\ref{prob1} the operators $(D_i)_{1\leq
i\leq m}$ are as in~\eqref{tytyt}, and that
\begin{equation}
 A\colon
x\mapsto \{0\}\quad\text{and}\quad
C\colon x\mapsto 0.
\end{equation}
Then we obtain the primal-dual pair 
\begin{equation}\label{primalplc}
\text{find $\overline{x}\in\HH$ such that}\; 
z\in \sum_{i=1}^m\omega_iL^{*}_i\big(B_i(L_i\overline{x}-r_i)\big),
\end{equation}
and
\begin{equation}\label{dualplc}
 \text{find $\overline{v}_1 \in \GG_1,\ldots, \overline{v}_m \in \GG_m$ such that 
  }
\begin{cases}
\sum_{i=1}^m\omega_i L_{i}^*\overline{v}_i= z,\\
(\exists\; x\in\HH)\big(\forall i\in\{1,\ldots,m\}\big)\; \overline{v}_i \in B_i(L_i x-r_i).
\end{cases}
\end{equation}
This framework is considered in~\cite{siop2}, where
further special cases will be found. In particular, it contains
the classical Fenchel-Rockafellar \cite{Rock67} and 
Mosco \cite{Mosc72} duality settings, as well as that of
\cite{Atto96}.
\end{example}

The paper is organized as follows.
Section~\ref{recall} is devoted to notation and background.
In Section~\ref{mainresult}, we present our algorithm, prove its
convergence, and compare it to existing work. 
Applications to minimization 
problems are provided in Section~\ref{applications}, where further
connections with the state-of-the-art are made.

\section{Notation and background}\label{recall}
We recall some notation and background from convex analysis and 
monotone operator theory (see \cite{livre1} for a detailed account).

Throughout, $\HH$, $\GG$, and $(\GG_i)_{1\leq i\leq m}$ 
are real Hilbert spaces. The scalars product and the associated 
norms of both $\HH$ and $\GG$ are denoted respectively by 
$\scal{\cdot}{\cdot}$ and $\|\cdot\|$. For every $i \in\{1,\ldots,m\}$,
the scalar product and associated norm of $\GG_i$ are denoted 
respectively by 
$\scal{\cdot}{\cdot}_{\GG_i}$ and $\|\cdot\|_{\GG_i}$.
We denote by $\mathcal{B}(\HH,\GG)$
the space of all bounded linear operators from $\HH$ to $\GG$. 
The symbols $\weakly $ and $\to$ denote respectively 
weak and strong convergence.
Let $A\colon\HH\to 2^{\HH}$ be a set-valued operator.
The domain and the graph of $A$ are respectively defined by 
$\dom A=\menge{x\in\HH}{Ax\neq\emp}$ and 
$\gra A=\menge{(x,u) \in \HH\times\HH}{u\in Ax}$.
We denote by $\zer A=\menge{x\in\HH}{0\in Ax}$ the set of zeros 
of $A$, and by 
$\ran A=\menge{u\in\HH}{(\exists\; x\in\HH)\; u\in Ax}$ 
the range of $A$. The inverse of $A$ is
$A^{-1}\colon\HH\mapsto 2^{\HH}\colon u\mapsto 
\menge{x\in\HH}{u\in Ax}$. The resolvent of $A$ is
\begin{equation}
 J_A=(\Id + A)^{-1}, 
\end{equation}
where $\Id$ denotes the identity operator on $\HH$.
Moreover, $A$ is monotone if 
\begin{equation}
 (\forall(x,y)\in\HH\times \HH)\;
 (\forall (u,v)\in Ax\times Ay)\quad \scal{x-y}{u-v} \geq 0,
\end{equation}
and maximally monotone if it is monotone and there exists no 
monotone operator 
$B\colon\HH\to2^\HH$ such that $\gra B$ properly contains $\gra A$. 
We say that $A$ is uniformly monotone 
at $x\in\dom A$ if there exists an 
increasing function $\phi\colon\left[0,+\infty\right[\to 
\left[0,+\infty\right]$ vanishing only at $0$ such that 
\begin{equation}\label{oioi}
\big(\forall u\in Ax\big)\big(\forall (y,v)\in\gra A\big)
\quad\scal{x-y}{u-v}\geq\phi(\|x-y\|).
\end{equation}
If $A-\alpha\Id$ is monotone for some 
$\alpha\in\left]0,+\infty\right[$, 
then $A$ is said to be $\alpha$-strongly monotone.
The parallel sum of two set-valued operators $A$ and $B$ 
from $\HH$ to $2^{\HH}$ is 
\begin{equation}\label{khoiduc}
A\;\vuo\; B=(A^{-1}+ B^{-1})^{-1}.
\end{equation} 
The class of all lower semicontinuous convex functions 
$f\colon\HH\to\left]-\infty,+\infty\right]$ such 
that $\dom f=\menge{x\in\HH}{f(x) < +\infty}\neq\emp$ 
is denoted by $\Gamma_0(\HH)$. Now, let $f\in\Gamma_0(\HH)$.
The conjugate of $f$ is the function $f^*\in\Gamma_0(\HH)$ defined by
$f^*\colon u\mapsto
\sup_{x\in\HH}(\scal{x}{u} - f(x))$, and the subdifferential 
of $f\in\Gamma_0(\HH)$ is the maximally monotone operator 
\begin{equation}
 \partial f\colon\HH\to 2^{\HH}\colon x
\mapsto\menge{u\in\HH}{(\forall y\in\HH)\quad
\scal{y-x}{u} + f(x) \leq f(y)}
\end{equation} 
with inverse given by
\begin{equation}
(\partial f)^{-1}=\partial f^*.
\end{equation}
Moreover,
the proximity operator of $f$ is
\begin{equation}
\prox_f\colon\HH\to\HH\colon x
\mapsto\underset{y\in\HH}{\argmin}\: f(y) + \frac12\|x-y\|^2.
\end{equation}
We have
\begin{equation}
 J_{\partial f}=\prox_{f}.
\end{equation}
The infimal convolution of two functions $f$ 
and $g$ from $\HH$ to $\left]-\infty,+\infty\right]$ is
\begin{equation}
f\;\vuo\; g\colon\HH\to\left]-\infty,+\infty\right]
\colon x\mapsto\inf_{y\in\HH}(f(x) + g(x-y)).
\end{equation}
Finally, let $S$ be a convex subset of $\HH$. 
The relative interior of $S$, i.e., the set of points $x \in S$ 
such that the cone generated by $−x + S$ is
a vector subspace of $\HH$, is denoted by $\ri S$.

\section{Algorithm and convergence}\label{mainresult}
Our main result is the following theorem, in which we introduce 
our splitting algorithm and prove its convergence. 

\begin{theorem}\label{main*}
In Problem~\ref{prob1}, suppose that
\begin{equation}\label{rang}
z\in\ran
\bigg(A+\sum_{i=1}^m\omega_iL^{*}_i
\big((B_i\;\vuo\; D_i)(L_i\cdot-r_i)\big)+C\bigg).
\end{equation}
Let $\tau$ and $(\sigma_i)_{1\leq i\leq m}$ be strictly positive numbers
such that 
\begin{equation}\label{condke}
2\rho\min\{\mu,\nu_1,\ldots,\nu_m\} > 1,
\text{where $\rho=\min\Big\{\tau^{-1},\sigma^{-1}_1,\ldots,\sigma^{-1}_m\Big\}
\Bigg(1- \sqrt{\tau\sum_{i=1}^m\sigma_i\omega_i\|L_i\|^2}\;\Bigg)$}.
\end{equation}
Let $\varepsilon\in\left]0,1\right[$,
let $(\lambda_n)_{n\in\NN}$ be a sequence in $\left[\varepsilon,1\right]$, 
let $x_0 \in\HH$, let $(a_{1,n})_{n\in\NN}$ and $(a_{2,n})_{n\in\NN}$ 
be absolutely summable sequences in $\HH$. 
For every $i\in\{1,\ldots, m\}$, let $v_{i,0} \in\GG_i$, and 
let $(b_{i,n})_{n\in\NN}$ and $(c_{i,n})_{n\in\NN}$ 
be absolutely summable sequences in $\GG_i$.
Let $(x_n)_{n\in\NN}$ and $(v_{1,n},\ldots,v_{m,n})_{n\in\NN}$ be sequences 
generated by  the following routine
\begin{equation}\label{eqalgo}
(\forall n\in\NN)\quad 
\begin{array}{l}
\left\lfloor
\begin{array}{l}
%\operatorname{for}\  i =1, \ldots, m\\
%\left\lfloor
%\begin{array}{l}
p_{n}=
J_{\tau A}\Big(x_{n}-\tau\Big(\sum_{i=1}^{m}
\omega_i L_{i}^*v_{i,n}+Cx_{n} + a_{1,n}-z\Big) \Big) + a_{2,n}\\
y_{n}=2p_{n} - x_n\\
x_{n+1}=x_n+\lambda_n(p_{n}-x_n)\\
%\end{array}
%\right.\\[2mm]
\operatorname{for}\  i =1, \ldots, m\\
\left\lfloor
\begin{array}{l}
q_{i,n}=
J_{\sigma_i B_{i}^{-1}}\Big(v_{i,n} + 
\sigma_i\Big(L_iy_{n} - D_{i}^{-1}v_{i,n} - c_{i,n} - r_i\Big)\Big)  
+ b_{i,n}\\
v_{i,n+1}=v_{i,n}+\lambda_n(q_{i,n}-v_{i,n}).
\end{array}
\right.\\[2mm]
\end{array}
\right.\\[2mm]
\end{array}
\end{equation}
Then the following hold for some $\overline{x} \in\mathcal{P}$ 
and $(\overline{v}_{1},\ldots,\overline{v}_{m})\in\mathcal{D}$.
\begin{enumerate}
\item \label{jus1}
$x_n\weakly \overline{x}$ and 
$(v_{1,n},\ldots,v_{m,n})
\weakly (\overline{v}_{1},\ldots, \overline{v}_{m})$.
\item \label{jus2}
Suppose that $C$ is uniformly monotone at $\overline{x}$. 
Then $x_n \to \overline{x}$.
\item \label{jus3}
Suppose that $D_{j}^{-1}$ is uniformly monotone at $\overline{v}_{j}$ for
some $j\in\{1,\ldots,m\}$.
Then $v_{j,n} \to \overline{v}_{j}$.
\end{enumerate}
\end{theorem}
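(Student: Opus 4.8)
The plan is to recast the recursion \eqref{eqalgo} as a relaxed forward--backward iteration, with errors, for a sum $\MM+\QQ$ of a maximally monotone operator $\MM$ and a cocoercive operator $\QQ$ acting on a renormed product space, and then to invoke the standard convergence theory of that method. First I would introduce $\KKK=\HH\oplus\GG_1\oplus\cdots\oplus\GG_m$, equipped with the \emph{weighted} scalar product $\scal{(x,\vv)}{(y,\ww)}_{\KKK}=\scal{x}{y}+\sum_{i=1}^m\omega_i\scal{v_i}{w_i}_{\GG_i}$, where $\vv=(v_1,\ldots,v_m)$. On $\KKK$ put $\QQ\colon(x,\vv)\mapsto(Cx,(D_i^{-1}v_i)_{1\le i\le m})$; since $C$ is $\mu$-cocoercive and each $D_i^{-1}$ is $\nu_i$-cocoercive (because $D_i$ is $\nu_i$-strongly monotone), $\QQ$ is $\beta$-cocoercive on $\KKK$ with $\beta=\min\{\mu,\nu_1,\ldots,\nu_m\}$. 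Next define the maximally monotone operator $\AAA\colon(x,\vv)\mapsto(Ax-z)\times(B_1^{-1}v_1+r_1)\times\cdots\times(B_m^{-1}v_m+r_m)$ and the bounded linear operator $\SSS\colon(x,\vv)\mapsto\big(\sum_{i=1}^m\omega_iL_i^*v_i,(-L_ix)_{1\le i\le m}\big)$, which the weighting renders skew on $\KKK$; then $\MM=\AAA+\SSS$ is maximally monotone, and a direct identification shows that $(\bar x,\bar\vv)\in\zer(\MM+\QQ)$ if and only if $\bar x$ solves \eqref{primal} and $\bar\vv$ solves \eqref{dual}. In particular the solvability hypothesis \eqref{rang} yields $\zer(\MM+\QQ)\neq\emp$.

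The crux is the choice of metric. I would introduce $\VV\colon(x,\vv)\mapsto\big(\tau^{-1}x-\sum_{i=1}^m\omega_iL_i^*v_i,(\sigma_i^{-1}v_i-L_ix)_{1\le i\le m}\big)$, which is self-adjoint on $\KKK$, and show that \eqref{condke} forces $\VV$ to be strongly positive, namely $\scal{\VV\xx}{\xx}_{\KKK}\ge\rho\|\xx\|_{\KKK}^2$ for every $\xx\in\KKK$, with $\rho$ the constant in \eqref{condke}. This is the main obstacle, and it rests on bounding the cross terms $2\sum_{i}\omega_i\scal{L_ix}{v_i}$ by Cauchy--Schwarz together with $\sqrt{\tau\sum_{i}\sigma_i\omega_i\|L_i\|^2}<1$. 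The key computation is then that the resolvent step $\boldsymbol{p}_n=J_{\VV^{-1}\MM}(\xx_n-\VV^{-1}\QQ\xx_n)$, with $\boldsymbol{p}_n=(p_n,q_{1,n},\ldots,q_{m,n})$ and $\xx_n=(x_n,v_{1,n},\ldots,v_{m,n})$, reproduces exactly the two backward steps of \eqref{eqalgo}: the off-diagonal blocks of $\VV$ feed the old dual iterates $v_{i,n}$ into the computation of $p_n$ and make the combination $y_n=2p_n-x_n$ appear in the computation of $q_{i,n}$. Consequently \eqref{eqalgo} is precisely $\xx_{n+1}=\xx_n+\lambda_n\big(J_{\VV^{-1}\MM}(\xx_n-\VV^{-1}(\QQ\xx_n+\ee_n))+\ee_n'-\xx_n\big)$, where $\ee_n$ and $\ee_n'$ are assembled from $(a_{1,n},a_{2,n},(b_{i,n})_i,(c_{i,n})_i)$ and are absolutely summable in $\KKK$.

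It then remains to apply the convergence of the variable-metric forward--backward algorithm. Since $\VV$ is $\rho$-strongly positive, $\VV^{-1}\QQ$ is $\rho\beta$-cocoercive with respect to $\scal{\cdot}{\cdot}_{\VV}=\scal{\VV\cdot}{\cdot}_{\KKK}$, and \eqref{condke} states exactly $2\rho\beta>1$, the required step-size regime; moreover $\VV^{-1}\MM$ is maximally monotone in that metric and $\zer(\MM+\QQ)\neq\emp$. The theorem then produces a point $\bar\xx=(\bar x,\bar\vv)\in\zer(\MM+\QQ)$ with $\xx_n\weakly\bar\xx$ (weak convergence being the same in the $\VV$-metric and in $\KKK$, as $\VV$ is an isomorphism), which is assertion~\ref{jus1}, and in addition yields the strong convergence $\QQ\xx_n\to\QQ\bar\xx$, that is, $Cx_n\to C\bar x$ and $D_i^{-1}v_{i,n}\to D_i^{-1}\bar v_i$ for every $i$.

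For~\ref{jus2}, uniform monotonicity of $C$ at $\bar x$ furnishes an increasing function $\phi$ vanishing only at $0$ with $\phi(\|x_n-\bar x\|)\le\scal{x_n-\bar x}{Cx_n-C\bar x}$; the right-hand side is a bounded sequence times a strongly null one, hence tends to $0$, forcing $x_n\to\bar x$. Assertion~\ref{jus3} follows identically, using uniform monotonicity of $D_j^{-1}$ at $\bar v_j$ and the strong convergence $D_j^{-1}v_{j,n}\to D_j^{-1}\bar v_j$. Beyond the positivity estimate for $\VV$, the remaining care is bookkeeping: checking that $\ee_n,\ee_n'$ stay absolutely summable in $\KKK$ and that the error-tolerant, relaxed form of the forward--backward theorem applies verbatim in the renormed space.
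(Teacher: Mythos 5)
Your proposal is correct and follows essentially the same route as the paper: the same weighted product space, the same splitting into a maximally monotone part $\MM+\SSS$ and the cocoercive operator $\QQ$, the same self-adjoint strongly positive operator $\VV$ used to renorm the space, and the same identification of \eqref{eqalgo} as an inexact relaxed forward--backward iteration whose standard convergence theory delivers (i)--(iii). The only differences are notational (your $\MM$ is the paper's $\MM+\SSS$) and the level of detail in the error bookkeeping.
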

\begin{proof}
We define $\GGG$ as the real Hilbert space obtained by 
endowing the Cartesian product $\GG_1\times\ldots\times\GG_m$ 
with the scalar product and the associated norm respectively 
defined by
\begin{equation}
\label{e:palawan-mai2008-}
\scal{\cdot}{\cdot}_{\GGG}
\colon(\vv,\ww)\mapsto
\sum_{i=1}^m\omega_i\scal{v_i}{w_i}_{\GG_i}
\quad\text{and}\quad\|\cdot\|_{\GGG}\colon
\vv\mapsto\sqrt{\sum_{i=1}^m\omega_i\|v_i\|_{\GG_i}^2},
\end{equation}
where $\vv=(v_1,\ldots, v_m)$ and $\ww=(w_1,\ldots, w_m)$
denote generic elements in $\GGG$.
Next, we let $\KKK$ be the Hilbert direct sum
\begin{equation}\label{direct}
\KKK=\HH\oplus\GGG.
\end{equation}
Thus, the scalar product and the norm of $\KKK$ are respectively defined by
\begin{equation}
\label{e:palawan-mai2008a}
\scal{\cdot}{\cdot}_{\KKK}
\colon\big((x,\vv),(y,\ww)\big)\mapsto
\scal{x}{y} + \scal{\vv}{\ww}_{\GGG}
\quad\text{and}\quad\|\cdot\|_{\KKK}\colon
(x,\vv)\mapsto\sqrt{\|x\|^2 + \|\vv\|_{\GGG}^{2}}.
\end{equation}
Let us set
\begin{alignat}{2}\label{maximal1}
\MM\colon\KKK&\to 2^{\KKK}\notag\\
(x,v_1,\ldots,v_m)&\mapsto \big(-z+ Ax\big)
\times\big(r_1 + B_{1}^{-1}v_1\big)\times\ldots\times\big(r_m + B^{-1}_{m}v_m\big).
\end{alignat}
Since the operators $A$ and $(B_i)_{1\leq i\leq m}$ are 
maximally monotone, $\MM$ is maximally
monotone~\cite[Propositions~20.22 and 20.23]{livre1}. 
We also introduce
\begin{alignat}{2}\label{maximal2}
\SSS\colon\KKK&\to \KKK\\
(x,v_1,\ldots,v_m)&\mapsto
\bigg(\sum_{i=1}^m\omega_iL_{i}^*v_i,-L_1x,\ldots,-L_mx\bigg).
\end{alignat}
Note that $\SSS$ is linear, bounded, and skew (i.e, $\SSS^*=-\SSS$).
Hence, $\SSS$ is maximally monotone~\cite[Example~20.30]{livre1}.
Moreover, since $\dom\SSS=\KKK$, $\MM+\SSS$
is maximally monotone~\cite[Corollary~24.24(i)]{livre1}. 
Since, for every $i\in\{1,\ldots,m\}$, $D_i$ is $\nu_i$-strongly monotone,
$D_{i}^{-1}$ is $\nu_i$-cocoercive.  Let us prove that 
\begin{alignat}{2}\label{maximal3}
\QQ\colon\KKK&\to \KKK\notag\notag\\
(x,v_1,\ldots,v_m)&\mapsto\big(Cx,D_{1}^{-1}v_1,\ldots, D_{m}^{-1}v_m\big)
\end{alignat}
is $\beta$-cocoercive with 
\begin{equation}
 \beta=\min\{\mu, \nu_1,\ldots, \nu_m\}.
\end{equation}
For every 
$(x,v_1,\ldots,v_m)$  and every
$(y,w_1,\ldots,w_m)$ in $\KKK$, we have
\begin{alignat}{2}\label{concho}
&\scal{(x,v_1,\ldots,v_m) - (y,w_1,\ldots,w_m)}
{\QQ(x,v_1,\ldots,v_m)-\QQ(y,w_1,\ldots,w_m)}_{\KKK}\notag\\
&= \scal{x-y}{Cx -Cy} + \sum_{i=1}^m\omega_i
\scal{v_i-w_i}{D_{i}^{-1}v_i - D_{i}^{-1}w_i}_{\GG_i}\notag\\
&\geq \mu \|Cx -Cy\|^2 + \sum_{i=1}^m\nu_i\omega_i
\|D_{i}^{-1}v_i - D_{i}^{-1}w_i\|_{\GG_i}^2\notag\\
&\geq \beta\bigg(\|Cx -Cy\|^2 + \sum_{i=1}^m\omega_i
\|D_{i}^{-1}v_i - D_{i}^{-1}w_i\|_{\GG_i}^2\bigg)\notag\\
&=\beta\|\QQ(x,v_1,\ldots,v_m)-\QQ(y,w_1,\ldots,w_m)\|_{\KKK}^2.
\end{alignat}
Therefore, by~\eqref{cococo}, $\QQ$ is $\beta$-cocoercive.
It is shown in~\cite[Eq.~(3.12)]{plc6} that under the condition~\eqref{rang},
$\zer(\MM + \SSS + \QQ)\neq\emp$. Moreover,~\cite[\rm Eq.~(3.21)]{plc6} and 
~\cite[\rm Eq.~(3.22)]{plc6} yield
\begin{equation}\label{khoqua}
(\overline{x},\overline{\vv}) 
\in\zer(\MM + \SSS + \QQ)\Rightarrow \overline{x} \in \mathcal{P}\quad
\text{and}\quad
\overline{\vv}\in\mathcal{D}.
\end{equation}
Now, define
\begin{alignat}{2}\label{vthetare}
\VV\colon\KKK&\to\KKK\notag\notag\\
(x,v_1,\ldots,v_m)&\mapsto
\bigg(\tau^{-1}x-\sum_{i=1}^m\omega_i L^{*}_iv_i,\sigma_{1}^{-1}v_1 -L_1x  
,\ldots,\sigma_{m}^{-1}v_m -L_mx\bigg).
\end{alignat}
Then $\VV$ is self-adjoint. 
Let us check that $\VV$ is $\rho$-strongly positive.
To this end, define
\begin{equation}
\TT\colon\HH\to\GGG\colon 
x\mapsto\Big(\sqrt{\sigma_1}L_1x,\ldots,\sqrt{\sigma_m}L_mx\Big).
\end{equation}
Then, 
\begin{alignat}{2}
(\forall x\in\HH)\quad
\|\TT x\|_{\GGG}^2 
= \sum_{i=1}^m\omega_i \sigma_i\|L_ix\|_{\GG_i}^2
\leq \|x\|^2\sum_{i=1}^m\omega_i \sigma_i\|L_i\|^2,
\end{alignat}
which implies that
\begin{equation}\label{toantu}
 \|\TT\|^2 \leq\sum_{i=1}^m\omega_i \sigma_i\|L_i\|^2.
\end{equation}
Now set
\begin{equation}\label{delke}
 \delta=
\Bigg(\sqrt{\tau\sum_{i= 1}^m \sigma_i\omega_i\|L_i\|^2}\;\Bigg)^{-1} -1.
\end{equation}
Then, it follows from~\eqref{condke} that $\delta > 0$. 
Moreover,~\eqref{toantu} 
and~\eqref{delke} yield 
\begin{equation}\label{pascau}
\tau\|\TT\|^2(1+ \delta)
\leq\tau(1+ \delta)\sum_{i=1}^m\omega_i \sigma_i\|L_i\|^2
= (1+\delta)^{-1}.
\end{equation}
For every $\xx= (x,v_1,\ldots,v_m)$ in $\KKK$, 
by using~\eqref{pascau}, we obtain
\begin{alignat}{2}
\scal{\xx}{\VV \xx}_{\KKK}
&=  \tau^{-1}\|x\|^2 + 
\sum_{i=1}^m \sigma_{i}^{-1}\omega_i\|v_i\|^{2}_{\GG_i}
- 2\sum_{i=1}^m\omega_i\scal{L_ix}{v_i}_{\GG_i}\notag\\
&= \tau^{-1}\|x\|^2 + 
\sum_{i=1}^m \sigma_{i}^{-1}\omega_i\|v_i\|^{2}_{\GG_i}
-2\sum_{i=1}^m\omega_i
\scal{\sqrt{\sigma_i} L_ix}{\sqrt{ \sigma_{i}}^{-1}v_i}_{\GG_i}\notag\\
&=\tau^{-1}\|x\|^2 + 
\sum_{i=1}^m \sigma_{i}^{-1}\omega_i\|v_i\|^{2}_{\GG_i}
- 2\scal{\TT x}{(\sqrt{\sigma_{1}}^{-1}v_1,\ldots,
\sqrt{\sigma_{m}}^{-1}v_m )}_{\GGG}\notag\\
&\geq \tau^{-1}\|x\|^2 + 
\sum_{i=1}^m\sigma_{i}^{-1}\omega_i\|v_i\|^{2}_{\GG_i}
 -\Bigg(\frac{\|\TT x\|_{\GGG}^2}{\tau(1+\delta)\|\TT\|^2}
+ \tau(1+\delta)\|\TT\|^2
\sum_{i=1}^m\sigma_{i}^{-1}\omega_i\|v_i\|^{2}_{\GG_i}\Bigg)\notag\\
&\geq\Big(1-(1+\delta)^{-1}\Big)\bigg(\tau^{-1}\|x\|^2 
+ \sum_{i=1}^m\sigma_{i}^{-1}\omega_i\|v_i\|^{2}_{\GG_i}\bigg)\notag\\
&\geq \Big(1-(1+\delta)^{-1}\Big)
\min\{\tau^{-1},\sigma^{-1}_1,\ldots,\sigma^{-1}_m\}\|\xx\|^{2}_{\KKK}\notag\\
&= \rho\|\xx\|^{2}_{\KKK}\label{27-8-230}.
\end{alignat}
Therefore, $\VV$ is $\rho$-strongly positive. Furthermore,
it follows from~\eqref{27-8-230} that
\begin{equation}\label{conkhi}
 \VV^{-1} \text{ exists and $\|\VV^{-1}\| \leq \rho^{-1}.$ }
\end{equation}
\ref{jus1}: We first observe that~\eqref{eqalgo} is equivalent to
\begin{equation}
\label{rewrite}
(\forall n\in\NN)\quad
\begin{array}{l}
\left\lfloor
\begin{array}{l}
\tau^{-1}(x_n - p_{n}) - 
\sum_{i=1}^{m}\omega_iL_{i}^*v_{i,n}- Cx_n\in \\
\hfill  -z+A(p_{n} -a_{2,n}) +
a_{1,n} - \tau^{-1}a_{2,n}\\
x_{n+1}=x_{n}+\lambda_n(p_{n}-x_n)\\ 
\operatorname{for}\  i =1, \ldots, m\\
\left\lfloor
\begin{array}{l}
\sigma_{i}^{-1}(v_{i,n} - q_{i,n})
- L_i(x_n - p_{n}) -D_{i}^{-1}v_{i,n}\in \\
\hfill\quad\quad\quad\quad\quad\quad r_i+B_{i}^{-1}(q_{i,n} - b_{i,n}) -
 L_ip_{n} + c_{i,n} - \sigma_{i}^{-1}b_{i,n}\\
v_{i,n+1}= v_{i,n}+\lambda_n(q_{i,n}-v_{i,n}).
\end{array}
\right.\\[2mm]
\end{array}
\right.\\[2mm]
\end{array}
\end{equation}
Now set
\begin{equation}
\big(\forall n\in\NN\big)\quad
\begin{cases}
\xx_n=(x_n, v_{1,n},\ldots,v_{m,n})\\
\yy_n=(p_n,q_{1,n},\ldots,q_{m,n})\\
\aaa_{n}=(a_{2,n}, b_{1,n},\ldots, b_{m,n})\\
\cc_n=(a_{1,n}, c_{1,n},\ldots, c_{m,n})\\
\dd_n=(\tau^{-1}a_{2,n},\sigma_{1}^{-1} b_{1,n}
,\ldots, \sigma_{m}^{-1}b_{m,n}).
\end{cases}
\end{equation}
We have
\begin{equation}\label{errorss}
\sum_{n\in\NN}\|\aaa_{n}\|_{\KKK} < +\infty,\quad
\sum_{n\in\NN}\|\cc_n\|_{\KKK} < +\infty,\quad
 \text{and}
\quad 
\sum_{n\in\NN}\|\dd_n\|_{\KKK} < +\infty.
\end{equation}
Furthermore,~\eqref{rewrite} yields
\begin{equation}
(\forall n\in\NN)\quad
\begin{array}{l}
\left\lfloor
\begin{array}{l}
\VV(\xx_n-\yy_{n})-\QQ\xx_n\in 
(\MM +\SSS)(\yy_{n} - \aaa_{n}) + 
\SSS\aaa_{n} + \cc_n - \dd_n\label{4:1:55}\\
\xx_{n+1}=\xx_n+\lambda_n (\yy_{n} - \xx_n).
\end{array}
\right.\\[2mm]
\end{array}
\end{equation}
Next, we set
\begin{equation}\label{gogong}
(\forall n\in\NN)\quad
\;\bb_n=\VV^{-1}\big((\SSS +\VV)\aaa_{n} + \cc_n - \dd_n\big).
\end{equation} 
Then~\eqref{errorss} implies that
\begin{equation}\label{conlon}
 \sum_{n\in\NN}\|\bb_n\|_{\KKK} < +\infty.
\end{equation}
 Moreover, using~\eqref{conkhi} and~\eqref{gogong}, we have
\begin{alignat}{2}
&\;(\forall n\in\NN)\quad\VV(\xx_n-\yy_{n})-\QQ\xx_n\in 
(\MM +\SSS)(\yy_{n} - \aaa_{n}) + 
\SSS\aaa_{n} + \cc_n - \dd_n\label{generic}\notag\\
\Leftrightarrow &\;(\forall n\in\NN)\quad(\VV - \QQ)\xx_{n}\in 
(\MM +\SSS + \VV)(\yy_{n} - \aaa_{n})
 + (\SSS +\VV)\aaa_{n} + \cc_n - \dd_n\notag\\
\Leftrightarrow 
&\;(\forall n\in\NN)\quad\yy_{n} 
=\big(\MM +\SSS + \VV\big)^{-1}\Big((\VV - \QQ)\xx_{n}
-(\SSS +\VV)\aaa_{n} - \cc_n + \dd_n\Big) + \aaa_{n}\notag\\
\Leftrightarrow 
&\;(\forall n\in\NN)\quad\yy_{n}= 
\Big(\IId + \VV^{-1}(\MM +\SSS)\Big)^{-1}\Big(\big(\IId-\VV^{-1}\QQ\big)\xx_{n} 
-\bb_n\Big) + \aaa_{n}.
\end{alignat}
We derive from~\eqref{4:1:55} that 
\begin{alignat}{2}(\forall n\in\NN)\quad
\xx_{n+1}&= \xx_n + \lambda_n\Big(\big(\IId+\VV^{-1}(\MM +\SSS)\big)^{-1}
\big(\xx_{n}-\VV^{-1}\QQ\xx_{n} - \bb_n\big)+\aaa_{n}-\xx_n\Big)\notag\\
&=\xx_n+\lambda_n\Big(J_{\AAA}\big(\xx_{n} - \BBB\xx_{n} - \bb_n\big)
+\aaa_{n}-\xx_n\Big)\label{27:901},
\end{alignat}
where 
\begin{equation}
 \AAA=\VV^{-1}(\MM +\SSS)\quad \text{and}\quad \BBB=\VV^{-1}\QQ.
\end{equation}
Algorithm~\eqref{27:901} has the structure  of the forward-backward 
splitting algorithm~\cite{Siop1}. Hence, it is sufficient 
to check the convergence conditions  of the forward-backward 
splitting algorithm~\cite[Corollary~6.5]{Siop1} to prove our claims.
To this end,
let us introduce the real Hilbert space $\KKK_{\VV}$ with scalar product
and norm defined by
\begin{equation}
\big(\forall (\xx,\yy) \in \KKK\times\KKK\big)\quad\scal{\xx}{\yy}_{\VV} 
= \scal{\xx}{\VV\yy}_{\KKK}\quad \text{and} \quad
 \|\xx\|_{\VV}=\sqrt{\scal{\xx}{\VV\xx}_{\KKK}} , 
\end{equation}
respectively.  
Since $\VV$ is a bounded linear operator, 
it follows from~\eqref{errorss} and~\eqref{conlon}  that
\begin{equation}
\sum_{n\in\NN}\|\aaa_n\|_{\VV} < +\infty\quad \text{and}\quad
 \sum_{n\in\NN}\|\bb_{n}\|_{\VV} < +\infty.
\end{equation}
Moreover, since $\MM +\SSS$ is monotone on $\KKK$, we have
\begin{alignat}{2}
\big(\forall (\xx,\yy) \in \KKK\times\KKK\big)\quad
\scal{\xx-\yy}{\AAA\xx-\AAA\yy}_{\VV} &=
\scal{\xx-\yy}{\VV \AAA \xx-\VV \AAA\yy}_{\KKK} \notag\\
&= \scal{\xx-\yy}{(\MM +\SSS)\xx-(\MM +\SSS)\yy}_{\KKK}\\
&\geq 0.
\end{alignat}
Hence, $\AAA$ is monotone on $\KKK_{\VV}$. Likewise,
$\BBB$ is monotone on $\KKK_{\VV}$. 
Since $\VV$ is strongly positive, and since 
$\MM +\SSS$ is maximally monotone on $\KKK$, 
$\AAA$ is maximally monotone on $\KKK_{\VV}$. 
Next, let us show that $\BBB$ is $(\beta\rho)$-cocoercive on $\KKK_{\VV}$.
Using~\eqref{concho},~\eqref{27-8-230} and~\eqref{conkhi},
we have
\begin{alignat}{2}
 \big(\forall(\xx,\yy) \in \KKK_{\VV}\times \KKK_{\VV}\big)\quad
\scal{\xx - \yy}{ \BBB\xx - \BBB\yy}_{\VV}&=
 \scal{\xx - \yy}{\VV \BBB\xx -\VV \BBB\yy}_{\KKK}\notag\\
&=\scal{\xx - \yy}{\QQ\xx -\QQ \yy}_{\KKK}\notag\\
& \geq \beta \|\QQ\xx -\QQ \yy\|_{\KKK}^2\notag\\
&=\beta \|\QQ\xx -\QQ \yy\|_{\KKK}
\|\QQ\xx -\QQ \yy\|_{\KKK}\notag\\
&= \beta\|\VV^{-1}\|^{-1}\|\VV^{-1}\|
\|\QQ\xx -\QQ \yy\|_{\KKK}\|\QQ\xx -\QQ \yy\|_{\KKK}\notag\\
&\geq \beta\|\VV^{-1}\|^{-1}
\|\VV^{-1}\QQ\xx - \VV^{-1}\QQ \yy\|_{\KKK}
\|\QQ\xx -\QQ \yy\|_{\KKK}\notag\\
&\geq \beta\|\VV^{-1}\|^{-1}
\scal{\VV^{-1}\QQ\xx - \VV^{-1}\QQ \yy}{\QQ\xx -\QQ \yy}_{\KKK}\notag\\
&= \beta\|\VV^{-1}\|^{-1}
\scal{\BBB\xx - \BBB\yy}{\QQ\xx -\QQ \yy}_{\KKK}\notag\\
&= \beta\|\VV^{-1}\|^{-1}
\|\BBB\xx - \BBB \yy\|_{\VV}^2\notag\\
&\geq \beta\rho\|\BBB\xx - \BBB \yy\|_{\VV}^2.
\end{alignat}
Hence, by~\eqref{cococo}, $\BBB$ is $(\beta\rho)$-cocoercive on $\KKK_{\VV}$. 
Moreover, it follows from our assumption that $2\beta\rho > 1$. 
Altogether, by~\cite[Corollary~6.5]{Siop1} the sequence $(\xx_n)_{n\in\NN}$
converges weakly in $\KKK_{\VV}$ to some 
$\overline{\xx} 
= (\overline{x},\overline{v}_{1},\ldots,\overline{v}_{m}) \in\zer(\AAA+ \BBB)=
\zer(\MM+\SSS+\QQ)$.
Since $\VV$ is self-adjoint and $\VV^{-1}$ exists,
the weak convergence of the sequence
$(\xx_n)_{n\in\NN}$ to $\overline{\xx}$ in $\KKK_{\VV}$
is equivalent to the weak convergence of 
$(\xx_n)_{n\in\NN}$ to $\overline{\xx}$ in $\KKK$.
Hence, $\xx_n\weakly\overline{\xx}\in\zer(\MM+\SSS+\QQ)$.
It follows from~\eqref{khoqua} that $\overline{x}\in\mathcal{P}$ and 
$(\overline{v}_{1},\ldots, \overline{v}_{m})\in\mathcal{D}$.
This proves~\ref{jus1}.

\ref{jus2}\&\ref{jus3}: It follows from~\cite[Remark~3.4]{Siop1} that 
\begin{equation}\label{khoqua11}
 \sum_{n\in\NN} \|\BBB \xx_n - \BBB\overline{\xx}\|_{\VV}^2< +\infty.
\end{equation}
On the other hand, from~\eqref{27-8-230} and~\eqref{khoqua11} yield
 $\BBB\xx_n-\BBB\overline{\xx}=\VV^{-1}(\QQ\xx_n - \QQ\overline{\xx}) \to 0$,
which implies that $\QQ\xx_n-\QQ\overline{\xx}\to 0$. Hence, 
\begin{equation}\label{1:04:19}
Cx_n\to C\overline{x} \quad\text{and}\quad 
\big(\forall i\in\{1,\ldots,m\}\big)\quad
D_{i}^{-1}v_{i,n}\to D_{i}^{-1}\overline{v}_{i}. 
\end{equation}
If $C$ is uniformly monotone at $\overline{x}$, 
then there exists an increasing function
$\phi_C\colon\left[0,+\infty\right[\to\left[0,+\infty\right]$ 
vanishing only at $0$ such that 
\begin{equation}\label{7:4:59}
 \phi_C(\|x_n-\overline{x}\|)\leq\scal{x_n-\overline{x}}{Cx_n-C\overline{x}}
\leq\|x_n -\overline{x}\|\;\|Cx_n-C\overline{x}\|.
\end{equation}
Notice that $(x_n-\overline{x})_{n\in\NN}$ is bounded. 
It follows from~\eqref{1:04:19} and~\eqref{7:4:59} that 
$x_n\to\overline{x}$. This proves~\ref{jus2}, 
and~\ref{jus3} is proved in a similar fashion. 
\end{proof}

\begin{remark} Here are some remarks concerning the 
connections between our framework and existing work.
\begin{enumerate}
\item
The strategy used in the proof of Theorem~\ref{main*}\ref{jus1} 
is to reformulate algorithm~\eqref{eqalgo} as a forward-backward 
splitting algorithm in a real Hilbert space endowed with a suitable 
norm. This renorming technique was used in~\cite{He10} for a 
minimization problem in finite-dimensional spaces. The same 
technique is also used in the primal-dual minimization problem of 
\cite{Condat}. 
\item
Consider the special case when $z=0$, and
$(B_i)_{1\leq i\leq m}$ and $(D_i)_{1\leq i\leq m}$
are as in~\eqref{tytyt}. Then algorithm~\eqref{eqalgo} reduces to
\begin{equation}
(\forall n\in\NN)\quad
x_{n+1}=x_n + \lambda_n\bigg( 
J_{\tau A}\Big(x_{n}-\tau(Cx_{n} + a_{1,n}) \Big) + a_{2,n} -x_n\bigg),
\end{equation}
which is the standard forward-backward splitting 
algorithm~\cite[Algorithm~6.4]{Siop1} where
the sequence $(\gamma_n)_{n\in\NN}$ in~\cite[Eq. (6.3)]{Siop1} 
is constant. 
\item 
The inclusions~\eqref{primalplc} and~\eqref{dualplc} in
Example~\ref{LPL} can be solved by~\cite[Theorem~3.8]{siop2}. 
However, the algorithm resulting from \eqref{eqalgo} in this special 
case is different from that of \cite[Theorem~3.8]{siop2}.
\item 
In Problem~\ref{prob1}, since $C$ and $(D_{i}^{-1})_{1\leq i\leq m}$ 
are cocoercive, they are Lipschitzian. Hence, Problem~\ref{prob1} 
can be solved by the algorithm proposed in~\cite[Theorem~3.1]{plc6},
which has a different structure from the present algorithm.
\item 
Consider the special case when $z=0$ and 
$(\forall i \in\{1,\ldots,m\})\; \GG_i =\HH, L_i=\Id$, 
$D_{i}^{-1}=0, r_i =0$. Then the primal inclusion~\eqref{primal} 
reduces to
\begin{equation}\label{primra}
\text{find $\overline{x}\in\HH$ such that}\; 
0\in A\overline{x}+\sum_{i=1}^m\omega_iB_i\overline{x} + C\overline{x}.
\end{equation}
This inclusion can be solved by the algorithm proposed 
in~\cite{Raguet11}, which is not designed as a primal-dual scheme.
\end{enumerate}
\end{remark}

\section{Application to minimization problems}\label{applications}
We provide an application of the algorithm~\eqref{eqalgo} 
to minimization problems, by revisiting
\rm~\cite[Problem~4.1]{plc6}.

\begin{problem}\label{App1}
Let $\HH$ be a real Hilbert space, let $z\in\HH$,
let $m$ be a strictly positive integer,
let $(\omega_i)_{1\leq i \leq m}$ 
be real numbers in $\left]0,1\right]$ such that 
$\sum_{i=1}^m \omega_i =1$,
let $f\in\Gamma_0(\HH)$, and let $h\colon\HH\to\RR$ be convex and
differentiable with a $\mu^{-1}$-Lipschitzian gradient for 
some $\mu\in\left]0,+\infty\right[$.
For every $i \in \{1,\ldots, m\}$, let $\GG_i$ be a real Hilbert space,
let $r_i \in\GG_i$, let $g_i \in \Gamma_0(\GG_i)$, 
let $\ell_i \in \Gamma_0(\GG_i)$ be $\nu_i$-strongly convex, for some
$\nu_i\in\left]0,+\infty\right[$, and
suppose that  $L_i\colon\HH\to\GG_i$ is a nonzero bounded linear operator.
Consider the primal problem
\begin{equation}\label{primalex6}
\underset{x\in\HH}{\text{minimize}} \;
f(x)+\sum_{i=1}^m\omega_i(g_i\;\vuo\;\ell_i)(L_ix-r_i) + h(x) -\scal{x}{z},
\end{equation}
and the dual problem  
\begin{equation}\label{dualex6}
 \underset{v_1\in\GG_1,\ldots,v_m\in\GG_m}{\text{minimize}} \;
(f^*\;\vuo\; h^*)\bigg(z-\sum_{i=1}^m\omega_i L_{i}^*v_i\bigg) +
\sum_{i=1}^m \omega_i\big(g^{*}_i(v_i)+\ell^{*}_i(v_i) +\scal{v_i}{r_i}_{\GG_i}\big).
\end{equation}
We denote by $\mathcal{P}_1$ and $\mathcal{D}_1$ the sets of 
solutions to~\eqref{primalex6} and~\eqref{dualex6}, respectively.
\end{problem}
\begin{corollary}\label{probex6}
In Problem~\ref{App1}, suppose that 
\begin{equation}\label{ranex6}
z\in\ran\bigg( \partial f + 
\sum_{i=1}^m\omega_iL^{*}_i
\big((\partial g_i\;\vuo\;\partial\ell_i)(L_i\cdot-r_i)\big)
+\nabla h\bigg).
\end{equation}
Let $\tau$ and $(\sigma_i)_{1\leq i\leq m}$ be strictly positive numbers
such that 
\begin{equation}\label{siez}
2\rho\min\{\mu,\nu_1,\ldots,\nu_m\} > 1,
\text{where $\rho 
= \min\Big\{\tau^{-1},\sigma^{-1}_1,\ldots,\sigma^{-1}_m\Big\}
\Bigg(1- \sqrt{\tau\sum_{i=1}^m\sigma_i\omega_i\|L_i\|^2}\Bigg)$}.
\end{equation}
Let $\varepsilon\in\left]0,1\right[$
and let $(\lambda_n)_{n\in\NN}$ be a sequence 
in $\left[\varepsilon,1\right]$, let $x_0 \in\HH$, 
let $(a_{1,n})_{n\in\NN}$ and $(a_{2,n})_{n\in\NN}$ 
be absolutely summable sequences in $\HH$. 
For every $i\in\{1,\ldots, m\}$, let $v_{i,0} \in\GG_i$, 
and let $(b_{i,n})_{n\in\NN}$ and $(c_{i,n})_{n\in\NN}$ 
be absolutely summable sequences in $\GG_i$.
Let $(x_n)_{n\in\NN}$ and $(v_{1,n},\ldots,v_{m,n})_{n\in\NN}$ 
be sequences generated by  the following routine
\begin{equation}\label{eqalgoex6}
(\forall n\in\NN)\quad 
\begin{array}{l}
\left\lfloor
\begin{array}{l}
%\operatorname{for}\  i =1, \ldots, m\\
p_{n}=
\prox_{\tau f}\Big(x_{n} -\tau\Big(\sum_{i=1}^{m}
\omega_i L_{i}^*v_{i,n}+\nabla h(x_{n}) + a_{1,n}-z\Big)\Big)+a_{2,n}\\
y_{n} =2p_{n} -x_n\\
x_{n+1}=x_n + \lambda_n(p_{n}-x_n)\\
\operatorname{for}\  i =1, \ldots, m\\
\left\lfloor
\begin{array}{l}
q_{i,n}=
\prox_{\sigma_i g_{i}^{*}}\Big(v_{i,n} + 
\sigma_i\Big(L_iy_{n}-\nabla\ell^{*}_i(v_{i,n}) + c_{i,n}
- r_i\Big)\Big)+b_{i,n}\\
v_{i,n+1}=v_{i,n}+\lambda_n(q_{i,n}-v_{i,n}).
\end{array}
\right.\\[2mm]
\end{array}
\right.\\[2mm]
\end{array}
\end{equation}
Then the following hold for some $\overline{x}\in\mathcal{P}_1$ and 
$(\overline{v}_{1},\ldots,\overline{v}_{m})\in\mathcal{D}_1$.
\begin{enumerate}
\item\label{7:10:05}
$x_n\weakly \overline{x}$ and 
$(v_{1,n},\ldots,v_{m,n}) \weakly (\overline{v}_{1},\ldots,\overline{v}_{m})
$.
\item\label{7:10:06}
Suppose that $h$ is uniformly convex at $\overline{x}$. 
Then $x_n \to \overline{x}$.
\item\label{7:10:07}
Suppose that $\ell_{j}^{*}$ is uniformly convex at $\overline{v}_{j}$
for some $j\in\{1,\ldots,m\}$. Then $v_{j,n}\to\overline{v}_{j}$.
\end{enumerate}
\end{corollary}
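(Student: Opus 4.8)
The plan is to obtain Corollary~\ref{probex6} as a direct specialization of Theorem~\ref{main*}. Problem~\ref{App1} is exactly the instance of Problem~\ref{prob1} produced by the identifications
\begin{equation*}
A=\partial f,\quad C=\nabla h,\quad\text{and}\quad
(\forall i\in\{1,\ldots,m\})\;\; B_i=\partial g_i,\;\; D_i=\partial\ell_i.
\end{equation*}
First I would confirm that these operators meet the standing requirements of Problem~\ref{prob1}. Since $f\in\Gamma_0(\HH)$ and $g_i\in\Gamma_0(\GG_i)$, both $\partial f$ and $\partial g_i$ are maximally monotone. Because $h$ is convex, differentiable, and has a $\mu^{-1}$-Lipschitzian gradient, the Baillon--Haddad theorem~\cite{livre1} guarantees that $C=\nabla h$ is $\mu$-cocoercive. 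Finally, the $\nu_i$-strong convexity of $\ell_i$ is equivalent to the $\nu_i$-strong monotonicity of $D_i=\partial\ell_i$, and it entails that $\ell_i^*$ is everywhere differentiable with $\nabla\ell_i^*=(\partial\ell_i)^{-1}=D_i^{-1}$~\cite{livre1}. Hence all hypotheses of Problem~\ref{prob1}, and therefore of Theorem~\ref{main*}, are in force.

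Next I would verify that the three ingredients of Theorem~\ref{main*} transcribe verbatim. For the \emph{hypotheses}: condition~\eqref{siez} is identical to~\eqref{condke} with the same $\beta=\min\{\mu,\nu_1,\ldots,\nu_m\}$, and the range condition~\eqref{ranex6} is literally~\eqref{rang} under the above identifications. For the \emph{recursion}: the resolvent identities $J_{\tau\partial f}=\prox_{\tau f}$ and $J_{\sigma_i B_i^{-1}}=J_{\sigma_i\partial g_i^*}=\prox_{\sigma_i g_i^*}$ (using $(\partial g_i)^{-1}=\partial g_i^*$), together with $D_i^{-1}=\nabla\ell_i^*$, turn the generic update~\eqref{eqalgo} into~\eqref{eqalgoex6} line by line. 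For the \emph{conclusions}: I must relate the solution sets of the inclusions to those of the minimization problems. If $\overline{x}$ solves the primal inclusion~\eqref{primal}, then the elementary (always valid) inclusions $\partial g_i\;\vuo\;\partial\ell_i\subseteq\partial(g_i\;\vuo\;\ell_i)$, the chain-rule inclusion $L_i^*\,\partial(g_i\;\vuo\;\ell_i)(L_i\cdot-r_i)\subseteq\partial\big((g_i\;\vuo\;\ell_i)(L_i\cdot-r_i)\big)$, and the sum-rule inclusion $\partial\varphi+\partial\psi\subseteq\partial(\varphi+\psi)$ yield $0\in\partial F(\overline{x})$ for the primal objective $F$ in~\eqref{primalex6}; hence $\mathcal{P}\subseteq\mathcal{P}_1$, and the dual counterpart $\mathcal{D}\subseteq\mathcal{D}_1$ follows analogously. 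These correspondences are precisely the ones recorded in~\cite[Problem~4.1]{plc6}, which I would invoke.

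With these three transcriptions in hand, assertion~\ref{7:10:05} follows immediately from Theorem~\ref{main*}\ref{jus1}, after re-expressing the limit point (which Theorem~\ref{main*} locates in $\mathcal{P}\times\mathcal{D}$) as an element of $\mathcal{P}_1\times\mathcal{D}_1$ via the inclusions just established. For~\ref{7:10:06} and~\ref{7:10:07} I would use the standard equivalence between uniform convexity of a function at a point and uniform monotonicity of its subdifferential at that point~\cite{livre1}: uniform convexity of $h$ at $\overline{x}$ makes $C=\nabla h$ uniformly monotone at $\overline{x}$, so Theorem~\ref{main*}\ref{jus2} yields $x_n\to\overline{x}$; and uniform convexity of $\ell_j^*$ at $\overline{v}_j$ makes $D_j^{-1}=\nabla\ell_j^*$ uniformly monotone at $\overline{v}_j$, so Theorem~\ref{main*}\ref{jus3} yields $v_{j,n}\to\overline{v}_j$. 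The only genuinely delicate step is the identification of the primal and dual solution sets with those of the two inclusions --- i.e.\ justifying the subdifferential calculus (parallel sum versus infimal convolution, the chain rule through $L_i$, and the sum rule) that links~\eqref{primalex6}--\eqref{dualex6} to~\eqref{primal}--\eqref{dual}; everything else is a mechanical substitution into Theorem~\ref{main*}.
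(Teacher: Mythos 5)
Your proposal is correct and follows essentially the same route as the paper: specialize Theorem~\ref{main*} with $A=\partial f$, $C=\nabla h$, $B_i=\partial g_i$, $D_i=\partial\ell_i$, use the Baillon--Haddad theorem for the cocoercivity of $\nabla h$, the identities $J_{\tau\partial f}=\prox_{\tau f}$ and $J_{\sigma_i(\partial g_i)^{-1}}=\prox_{\sigma_i g_i^*}$ for the recursion, and the correspondence $\mathcal{P}\subseteq\mathcal{P}_1$, $\mathcal{D}\subseteq\mathcal{D}_1$ together with the equivalence between uniform convexity and uniform monotonicity for the conclusions. The only cosmetic difference is that you sketch the subdifferential-calculus inclusions explicitly where the paper simply cites the proof of \cite[Theorem~4.2]{plc6}.
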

\begin{proof}
The connection between Problem~\ref{App1} and Problem~\ref{prob1} is 
established in the proof of~\cite[Theorem~4.2]{plc6}. 
Since $\nabla h$ is $\mu^{-1}$-Lipschitz continuous, 
by the Baillon-Haddad Theorem~\cite{Baillon77,plc-hh}, 
it is $\mu$-cocoercive. Moreover since, 
for every $i\in\{1,\ldots,m\}$, $\ell_{i}$ is $\nu_i$-strongly 
convex, $\partial \ell_{i}$ is $\nu_{i}$-strongly monotone. 
Hence, by applying Theorem~\ref{main*}\ref{jus1} 
with $A=\partial f$, $J_{\tau A}=\prox_{\tau f}$, 
$C=\nabla h$ and for every $i \in\{1,\ldots,m\}$, 
$D_{i}^{-1}=\nabla \ell^{*}_i$, $B_i=\partial g_i$, 
$J_{\sigma_i B^{-1}_i}=\prox_{\sigma_{i}g_{i}^*}$, we obtain that
the sequence $(x_n)_{n\in\NN}$ converges weakly to some
$\overline{x}\in\HH$ such that 
\begin{equation}
z\in \partial f(\overline{x}) + 
\sum_{i=1}^m\omega_iL^{*}_i
\big((\partial g_i\;\vuo\;\partial\ell_i)(L_i\overline{x}-r_i)\big)
+\nabla h(\overline{x}),
\end{equation}
and the sequence $((v_{1,n},\ldots,v_{m,n}))_{n\in\NN}$ 
converges weakly to some
$(\overline{v}_{1},\ldots,\overline{v}_{m})$ such that 
\begin{equation}
\big(\exists\; x\in\HH\big)\quad
\begin{cases}
z-\sum_{i=1}^m\omega_i L_{i}^*\overline{v}_i 
 \in \partial f(x)+ \nabla h(x)\\
(\forall i\in\{1,\ldots,m\})\quad\overline{v}_i
 \in (\partial g_i\;\vuo\; \partial \ell_i)(L_ix-r_i).
\end{cases}
\end{equation}
As shown in the proof of~\cite[Theorem~4.2]{plc6},
$\overline{x}\in\mathcal{P}_1$ and 
$(\overline{v}_{1},\ldots,\overline{v}_{m})\in\mathcal{D}_1$. 
This proves~\ref{7:10:05}. 
Now, if $h$ is uniformly convex at $\overline{x}$, 
then $\nabla h$ is uniformly monotone at $\overline{x}$. 
Hence,~\ref{7:10:06} follows from 
Theorem~\ref{main*}\ref{jus2}. Similarly,~\ref{7:10:07} follows from 
Theorem~\ref{main*}\ref{jus3}. 
\end{proof}

\begin{remark} Here are some observations on the above results.
\begin{enumerate}
\item 
If a function $\varphi\colon\HH\to\RR$ is convex and
differentiable function with a $\beta^{-1}$-Lipschitzian 
gradient, then $\nabla\varphi$ is 
$\beta$-cocoercive~\cite{Baillon77,plc-hh}.
Hence, in the context of convex minimization problems, the
restriction of cocoercivity made in Problem~\ref{prob1} with respect
to the problem considered in \cite{plc6} disappears. Yet, the
algorithm we obtain is quite different from that 
proposed in \cite[Theorem~4.2]{plc6}. 
\item 
Sufficient conditions which ensure that~\eqref{ranex6}
is satisfied are provided in~\cite[Proposition~4.3]{plc6}.
For instance, if \eqref{primalex6} has at 
least one solution, and if $\HH$ and $(\GG_i)_{1\leq i\leq m}$ 
are finite-dimensional, and there exists $x\in\ri\dom f$ such that
\begin{equation}
\big(\forall i\in\{1,\ldots,m\}\big)\quad
L_i x-r_i \in \ri\dom g_i + \ri\dom\ell_i,
\end{equation}
then~\eqref{ranex6} holds.
\item 
Consider the special case when $z=0$ and, for every 
$i\in\{1,\ldots,m\}$,
$r_i=0, \sigma_i=\sigma\in\left]0,+\infty\right[$, and
\begin{equation}\label{indic}
 \ell_i\colon v\mapsto 
\begin{cases}
 0&\text{if $v=0$,}\\
+\infty&\text{otherwise}.
\end{cases}
\end{equation}
Then, \eqref{eqalgoex6} reduces to
\begin{equation}\label{eqalgoex611}
(\forall n\in\NN)\quad 
\begin{array}{l}
\left\lfloor
\begin{array}{l}
p_{n}=
\prox_{\tau f}\Big(x_{n} -\tau\Big(\sum_{i=1}^{m}
\omega_i L_{i}^*v_{i,n}+\nabla h(x_{n}) + a_{1,n}\Big)\Big)+a_{2,n}\\
y_{n} =2p_{n} -x_n\\
x_{n+1}=x_n + \lambda_n(p_{n}-x_n)\\
\operatorname{for}\  i =1, \ldots, m\\
\left\lfloor
\begin{array}{l}
q_{i,n}=
\prox_{\sigma g_{i}^{*}}\Big(v_{i,n} + 
\sigma\big(L_iy_{n}+ c_{i,n}\big)\Big)+b_{i,n}\\
v_{i,n+1}=v_{i,n}+\lambda_n(q_{i,n}-v_{i,n}),
\end{array}
\right.\\[2mm]
\end{array}
\right.\\[2mm]
\end{array}
\end{equation}
which is
the method proposed in~\cite[Eq.~(36)]{Condat}. 
However, in this setting, the conditions~\eqref{siez}
and~\eqref{ranex6} are different 
from the conditions~\cite[Eq.~(38)]{Condat}
and~\cite[Eq.~(39)]{Condat}, respectively.
Moreover, the present paper provides the strong convergence
conditions.
\item 
In finite-dimensional spaces, with exact implementation of the
operators, and with the further restriction that
$m =1$, $h\colon x\mapsto 0$, 
$\ell_1$ is as in~\eqref{indic}, 
$r_1=0$, and $z=0$, \eqref{eqalgoex6}
remains convergent if $\lambda_n\equiv\lambda\in\left]0,2\right[$
under the same condition presented here~\cite[Remark~5.4]{He10}.
If we further impose the restriction $\lambda_n\equiv 1$, 
then \eqref{eqalgoex6} reduces to the method proposed 
in~\cite[Algorithm~1]{Chamb11}. 
An alternative primal-dual algorithm for this problem
is proposed in~\cite{chen}.
\end{enumerate}
\end{remark}

\noindent{{\bfseries Acknowledgement.}}
I thank Professor Patrick L. Combettes for bringing this problem 
to my attention and for helpful discussions.

\end{document}